\numberwithin{equation}{section}
\newtheorem{theorem}{Theorem}[section]
\newtheorem{definition}[theorem]{Definition}
\newtheorem{lemma}[theorem]{Lemma}
\newtheorem{remark}[theorem]{Remark}
\title[the Orlicz Minkowski Problem for $q$-torsional rigidity]{Flow by Gauss Curvature to the Orlicz Minkowski Problem for $q$-torsional rigidity}
\author{Xia Zhao and Peibiao Zhao}
\thanks{2020 Mathematics Subject Classification:  52A20 \ \ 35K96\ \ 58J35.}
\keywords{Gauss curvature flow; $q$-torsional rigidity; Orlicz Minkowski problem; Monge-Amp\`{e}re equation}
\begin{document}
\begin{abstract}
The celebrated Minkowski problem for the torsional rigidity ($2$-torsional rigidity) was firstly studied by Colesanti and Fimiani \cite{CA} using variational method. Moreover, Hu, Liu and Ma \cite{HJ} also studied the Minkowski problem {\it w.r.t.} $2$-torsional rigidity by method of curvature flows and obtain the existence of smooth even solutions. Up to now, as far as we know, the study of the Minkowski problem for the $q$-torsional rigidity is still  blank.

In the present paper, we propose and investigate the Orlicz Minkowski problem for the $q$-torsional rigidity corresponding to the $q$-Laplace equation inspired by the foregoing works, and then confirm the existence of smooth non-even solutions to the Orlicz Minkowski problem for the $q$-torsional rigidity with $q>1$ by the method of a Gauss curvature flow.
\end{abstract}

\maketitle

\vskip 20pt
\section{Introduction and main results}

The classical Minkowski problem argues the existence, uniqueness and regularity of a convex body whose surface area measure is equal to a pre-given Borel measure on the sphere $S^{n-1}$. If the given measure has a positive continuous density, the Minkowski problem can be seen as the problem of prescribing the Gauss curvature in differential geometry. The Minkowski problem and its solution can be traced back to the works of Minkowski \cite{MH}, other influential works, such as Lewy \cite{LH}, Nirenberg \cite{NL}, Pogorelov \cite{PA} and Cheng-Yau \cite{CS}, etc..

In the past $30$ years, the Minkowski problem played an important role in the study of convex geometry, and the research of Minkowski problem has promoted the development of fully nonlinear partial differential equations. The Minkowski problem has produced some variations of it, among which the $L_p$($p\in \mathbb{R}$) Minkowski problem is particularly important because the $L_p$($p\in \mathbb{R}$) Minkowski problem contains some special versions. When $p=1$, it is the classical Minkowski problem; when $p=0$, it is the log-Minkowski problem \cite{BO}; when $p=-n$, it is the centro-affine Minkowski problem \cite{ZG}. The $L_p$ Minkowski problem with $p>1$ was first proposed and studied by Lutwak \cite{LE0}, whose solution plays a key role in establishing the $L_p$ affine Sobolev inequality \cite{HC2, LE01}. Until 2010, Haberl, Lutwak, Yang and Zhang \cite{HC1} extended the ($L_p$) Minkowski problem to Orlicz space which breaks through the limitation of $L_p$ space that can only handle polynomial growth functions by introducing the Young function $\varphi(s)$ as a ``growth metric", providing a powerful and flexible toolbox for analyzing functions with arbitrary (convex) growth behavior, heterogeneity, or anisotropy. It unifies and promotes the $L_p$ space theory, greatly expanding the application scope of function space methods, enabling them to more accurately describe and solve complex nonlinear problems in numerous natural science and engineering fields. And the Orlicz Minkowski problem is a profound paradigm extension in convex geometry analysis. It unifies the classical Minkowski problem and $L_p$ Minkowski problem into one framework by introducing the Young functions $\varphi$, greatly expanding the range and complexity of geometric anisotropy that can be described. It not only solves a wider range of geometric inverse problems, but also promotes convex geometry, PDE and the technological development in fields such as optimal transmission, has also given rise to new research directions (such as Orlicz Brunn Minkowski theory), providing powerful tools for understanding highly heterogeneous and anisotropic geometric structures.

We know that the different geometric measures are corresponding to the different Minkowski type problems. Some geometric measures with physical backgrounds have been introduced into the Brunn-Minkowski theory, naturally, the related Minkowski type problems have also been gradually studied. For instance, Xiao \cite{XJ} prescribed capacitary curvature measures on planar convex domains: If a given finite nonnegative Borel measure $\mu \in S^1$ has centroid at the origin and its supp($\mu$) does not comprise any pair of antipodal points, then there is a unique (up to translation) convex, nonempty, open set $\Omega\subset \mathbb{R}^2$ such that $d\mu_q(\Omega,\cdot)=d\mu(\cdot)$, where $\mu_q(\Omega,\cdot)$ is $q$-capacitary curvature measure of $\Omega$ with $q\in(1,2]$. In addition, the Minkowski problem for torsional rigidity ($2$-torsional rigidity) was firstly studied by Colesanti and Fimiani \cite{CA} using variational method. Moreover, the authors in \cite{ZX} posed the Orlicz chord Minkowski problem and obtained its smooth solutions by a Gauss curvature flow. Inspired by above works, we will continuous to study the Orlicz Minkowski type problem with a physical background in the present paper, namely, the Orlicz Minkowski problem for $q$-torsional rigidity.

For convenience, we recall and state firstly  the concept of $q$-torsional rigidity and its related contents as below. Let $\mathcal{K}^n$ be the collection of convex bodies in Euclidean space $\mathbb{R}^n$. The set of convex bodies containing the origin in their interiors in $\mathbb{R}^n$, we write $\mathcal{K}^n_o$. Moreover, we let $C^2_{+}$ be the class of convex bodies of $C^2$ if its boundary has the positive Gauss curvature.

We  now do the needful. The so-called torsional rigidity $T(\Omega)$(or $2$-torsional rigidity $T_2(\Omega)$) of convex body $\Omega$ in $\mathbb{R}^n$ is described by (see \cite{CA0})
\begin{align*}
\frac{1}{T(\Omega)}=\inf\bigg\{\frac{\int_{\Omega}|\nabla u|^2 dy}{[\int_{\Omega}|u|dy]^2}:u\in W_0^{1,2}(\Omega),\int_{\Omega}|u|dy>0\bigg\}.
\end{align*}
It has been provided that, there exists a unique function $u$ such that
\begin{align*}
T_2(\Omega)=\int_{\Omega}|\nabla u|^2dy,
\end{align*}
where $u$ satisfies the boundary-value problem

\begin{align*}
\left\{
\begin{array}{lc}
\Delta u(y)=-2\ \ \  \text{in} \ \ \ \Omega,\\
u(y)=0,\ \ \ \ \ \ \ \ \text{on} \ \ \ \ \partial\Omega.\\
\end{array}
\right.
\end{align*}
Here, $\Delta u$ is the Laplace operator.

Furthermore, we  introduce the $q$-torsional rigidity (denoted by $T_q(\Omega)$)\cite{CA1} with $q>1$.  Let $\Omega\in\mathcal{K}^n$, the $q$-torsional rigidity $T_q(\Omega)$  is defined by
\begin{align}
\label{eq101}\frac{1}{T_q(\Omega)}=\inf\bigg\{\frac{\int_{\Omega}|\nabla u|^q dy}{[\int_{\Omega}|u|dy]^q}:u\in W_0^{1,q}(\Omega),\int_{\Omega}|u|dy>0\bigg\}.
\end{align}
The functional defined in (\ref{eq101}) admits a minimizer $u\in W_0^{1,q}(\Omega)$, and $cu$( for some constant $c$) is unique positive solution of the following boundary value problem (see \cite{BM} or \cite{HH})
\begin{align}\label{eq102}
\left\{
\begin{array}{lc}
\Delta_qu=-1\ \ \  \text{in} \ \ \ \Omega,\\
u=0,\ \ \ \ \ \ \ \ \text{on} \ \ \ \ \partial\Omega,\\
\end{array}
\right.
\end{align}
where $$\Delta_qu={\rm div}(|\nabla u|^{q-2}\nabla u)$$
is the $q$-Laplace operator. Obviously, when $q=2$, it is the $2$-torsional rigidity.

Applying (\ref{eq102}) with the Gauss-Green formula, we have
\begin{align}\label{eq103}
\int_{\Omega}|\nabla u|^qdy=\int_{\Omega}udy,
\end{align}
from (\ref{eq101}) and (\ref{eq103}), it follows
\begin{align}\label{eq104}
T_q(\Omega)=\frac{(\int_{\Omega}udy)^q}{\int_{\Omega}|\nabla u|^qdy}=\bigg(\int_{\Omega}udy\bigg)^{q-1}=\bigg(\int_{\Omega}|\nabla u|^qdy\bigg)^{q-1}.
\end{align}
With the aid of Poho$\check{\textrm{z}}$aev-type identities of \cite{PU}, the $q$-torsional rigidity formula (\ref{eq104}) is given by
\begin{align}\label{eq105}
T_q(\Omega)^{\frac{1}{q-1}}=&\frac{q-1}{q+n(q-1)}\int_{S^{n-1}}h(\Omega,x)d\mu^{tor}_{q}(\Omega,x)\\
\nonumber=&\frac{q-1}{q+n(q-1)}\int_{S^{n-1}}h(\Omega,x)|\nabla u|^qdS(\Omega,x).
\end{align}
Denoting $\widetilde{T}_q(\Omega)=T_q(\Omega)^{\frac{1}{q-1}}$, the $q$-torsional measure $\mu^{tor}_{q}(\Omega,\eta)$ is defined by
\begin{align}\label{eq106}
\mu^{tor}_q(\Omega,\eta)=\int_{g^{-1}(\eta)}|\nabla u(y)|^qd\mathcal{H}^{n-1}(y)=\int_\eta|\nabla u(g^{-1}(x))|^qdS(\Omega,x),
\end{align}
for any Borel set $\eta\subseteq S^{n-1}$. Here, $g:\partial\Omega\rightarrow S^{n-1}$ is a Gauss map, and $\mathcal{H}^{n-1}$ is the $(n-1)$-dimensional Hausdorff measure.

Recently, Hu and Zhang \cite{HJ2} firstly established the functional form of the Orlicz Brunn-Minkowski theory for the $q$-torsional rigidity with $q>1$ by variational formula in the smooth category. Combining the Orlicz Minkowski sum with formula (\ref{eq105}), they introduced the Orlicz mixed $q$-torsional rigidity of functional form as follows.

\begin{definition}\label{def11}\cite[Definition 3.4]{HJ2}~~~~Suppose $\varphi\in \overline{\Phi}$, $q>1$, and $f, g,a\cdot f+_{\varphi} b\cdot g\in \mathcal{E}$ for $a,b\in I$ (not both zero). Define the Orlicz mixed $q$-torsional rigidity $\widetilde{T}_{\varphi,q}([f],g)$ by
\begin{align*}
\widetilde{T}_{\varphi,q}([f],g)=\frac{\gamma\varphi^\prime_l(1)}{\alpha}\frac{d\widetilde{T}_q(f+_{\varphi} t\cdot g)}{dt}\bigg|_{t=0^+}=\frac{\gamma}{\alpha}\int_{S^{n-1}}\varphi\bigg(\frac{g(x)}{f(x)}\bigg)f(x)d\mu^{tor}_q([f],x).
\end{align*}
Here,
\begin{align*}\overline{\Phi}=\{\varphi \in C^{2,\alpha}(\mathbb{R}):\varphi \in \Phi:\Phi~~\text {be the class of convex and strictly increasing function}\},
\end{align*}
\begin{align*}
\mathcal{E}=\{h\in C^{2,\alpha}_{+}(S^{n-1}):(h_{ij}+h\delta_{ij})~~ \text{is positive definite}\},
\end{align*}
$I\subset[0,+\infty)$ is a bounded interval and $\frac{\gamma}{\alpha}=\frac{q-1}{n(q-1)+q}$.
\end{definition}
Suppose $K,L\in\mathcal{K}^n_o, q>1$, and  are of class $C^{2,\alpha}_+$, analogous to Definition \ref{def11}, there also has following definition.

\begin{definition}\label{def12}\cite[Definition 3.8]{HJ2}~~~~Suppose $\varphi\in \overline{\Phi}$, $q>1$, and $K, L,a\cdot K+_{\varphi} b\cdot L\in \mathcal{K}^n_o$ that are of class $C^{2,\alpha}_+$ for $a,b\in I$ (not both zero). Then the Orlicz mixed $q$-torsional rigidity $\widetilde{T}_{\varphi,q}(K,L)$ defined by
\begin{align*}
\widetilde{T}_{\varphi,q}(K,L)=\frac{\gamma\varphi^\prime_l(1)}{\alpha}\frac{d\widetilde{T}_q(K+_{\varphi} t\cdot L)}{dt}\bigg|_{t=0^+}=\frac{\gamma}{\alpha}\int_{S^{n-1}}\varphi\bigg(\frac{h_L(x)}{h_K(x)}\bigg)h_K(x)d\mu^{tor}_q(K,x).
\end{align*}
\end{definition}
We can see if $\varphi(s)=s^p$ $(1\leq s<\infty)$, Definition \ref{def12} reduces to the $L_p$ variational formula for $q$-torsional rigidity in \cite{HY0}.

Motivated by the forgoing  variational formula for $q$-torsional rigidity with respect to Orlicz sum, we not only get the Orlicz mixed $q$-torsional rigidity of $K, L$, but also obtain Orlicz $q$-torsional measure $\mu^{tor}_{\varphi,q}$. Thus, we can propose the normalised Orlicz Minkowski problem for $q$-torsional rigidity.

{\bf Orlicz Minkowski problem for $q$-torsional rigidity}: Let $q>1$, $\mu$ be a given nonzero finite Borel measure on $S^{n-1}$, $\varphi\in\overline{\Phi}$ and $\varphi:(0,\infty)\rightarrow (0,\infty)$, under what the necessary and sufficient conditions on $\mu$, does there exist an unique convex body $\Omega$ whose support function is $h$ and a positive constant $\tau$ so that $\mu=\tau\mu^{tor}_{\varphi,q}$, i.e.
\begin{align}\label{eq107}
d\mu=\tau\varphi(h)d\mu^{tor}_{q}(\Omega,\cdot).
\end{align}

Combining (\ref{eq106}), if the given measure $\mu$ is absolutely continuous with respect to the Lebesgue measure and $\mu$ has a smooth density function $f:S^{n-1}\rightarrow (0,\infty)$, then solving problem (\ref{eq107}) can be equivalently viewed as solving the following Monge-Amp\`{e}re equation on $S^{n-1}$:
\begin{align}\label{eq108}
\tau\varphi(h)|\nabla u|^q\det(h_{ij}+h\delta_{ij})=f.
\end{align}

In (\ref{eq107}), when $q=2$, the Orlicz Minkowksi problem w.r.t. the $2$-torsional rigidity was first developed and proven by Li and Zhu \cite{LN}. Moreover, Hu, Liu and Ma \cite{HJ} with the help of Equation (\ref{eq108}) obtained the existence of smooth even solutions by Gauss curvature flow for this problem.
In the case of $q=2$, let $\varphi(s)=s$, the Minkowski problem for $2$-torsional rigidity was firstly proposed by Colesanti and Fimiani \cite{CA} and the smooth even solution was obtained in \cite{HJ00} by a Gauss curvature flow. The same case of $q$, the Minkowski problem for $2$-torsional rigidity was extended $L_p$ version ($\varphi(s)=s^{p}$) by Chen and Dai \cite{CZ} who proved the existence of solutions for any fixed $p>1$ and $p\neq n+2$, Hu and Liu \cite{HJ01} for $0<p<1$.

In the present paper, we will continuous to investigate the Orlicz Minkowski problem for $q$-torsional rigidity by method of the curvature flows. The curvature flow is a very important method and an effective tool for studying geometric problems. For example, the mean curvature flow plays a crucial role in the study of geometric inequalities. Wang, Weng and Xia \cite{WG} constructed a new locally constrained curvature flow to obtain Alexandrov-Fenchel inequalities for convex hypersurfaces in the half-space with capillary boundary. Wei and Xiong \cite{WY} gave new proof of a class of Alexandrov-Fenchel
inequalities for anisotropic mixed volumes of smooth convex domains in Euclidean space by a volume-preserving anisotropic mean curvature type flow, etc.. In addition, the curvature flows also involving the particularly essential Gauss curvature flow which was first introduced and studied by Firey \cite{FI} to model the shape change of worn stones. It can mainly be used to study the existence of smooth solutions to the famous Minkowski (type) problems. Chen, Huang and Zhao \cite{CC} obtained smooth even solutions to the $L_p$ dual Minkowski problem by the Gauss curvature flow. Liu and Lu \cite{LY} solved dual Orlicz-Minkowski problem by the Gauss curvature flow and obtained smooth solutions. Various Gauss curvature flows have been extensively studied, see \cite{AB, BR, BR1, CH, CK0, HJ00, HJ, LY1} and the references therein.

In this article, we investigate the Orlicz Minkowski problem for $q$-torsional rigidity with $q>1$ and confirm the existence of smooth, non-even strictly convex solutions for (\ref{eq108}) by a Gauss curvature flow. Let $\partial\Omega_0$ be a smooth, closed and strictly convex hypersurface in $\mathbb{R}^n$ containing the origin in its interior, $f$ is a positive smooth function on $S^{n-1}$, $\varphi\in\overline{\Phi}$ and $\varphi:(0,\infty)\rightarrow (0,\infty)$. We construct and consider the long-time existence and convergence of the following Gauss curvature
flow which is a family of convex hypersurfaces $\partial\Omega_t$  parameterized by  smooth maps $X(\cdot ,t):
S^{n-1}\times (0, \infty)\rightarrow \mathbb{R}^n$
satisfying the initial value problem
\begin{align}\label{eq109}
\left\{
\begin{array}{lc}
\frac{\partial X(x,t)}{\partial t}=-\lambda(t)f(\nu)\frac{ (X\cdot \nu)}{|\nabla u(X,t)|^q\varphi(X\cdot \nu)}\mathcal{K}
(x,t)\nu+X(x,t),  \\
X(x,0)=X_0(x),\\
\end{array}
\right.
\end{align}
where $\mathcal{K}(x,t)$ is the Gauss curvature of hypersurface $\partial\Omega_t$,  $\nu=x$ is the
outer unit normal at $X(x,t)$, $X\cdot \nu $ represents standard inner product of $X$ and $\nu$, and $\lambda(t)$ is defined as
\begin{align*}\lambda(t)=\frac{\int_{S^{n-1}}|\nabla u(X,t)|^q\rho^nd\xi}{\int_{S^{n-1}}\frac{hf(x)}{\varphi(h)}dx},\end{align*}
where $\rho$ and $h$ are the radial function and the support function of convex hypersurface $\partial \Omega_t$, respectively.

For the convergence of discussing Gauss curvature flow (\ref{eq109}) in the following text, we introduce a functional for any $t\geq 0$,
\begin{align}\label{eq110}
\Gamma(\Omega_t)=
\int_{S^{n-1}}f(x)\phi(h(x,t))dx.
\end{align}
Here, let's assume that $\phi(s)=\int_{0}^{s}\frac{1}{\varphi(\zeta)}d\zeta$ exists for all $s>0$ and $\lim_{s\rightarrow \infty}\phi(s)=\infty$, where $h(\cdot,t)$ is the support function of $\Omega_t$.

Combining problem (\ref{eq108}) with flow (\ref{eq109}), we establish the following main conclusion in this article.

\begin{theorem}\label{thm13}
Let $q>1$, $\varphi\in\overline{\Phi}$ and $\varphi:(0,\infty)\rightarrow (0,\infty)$, $u(X,t)$ be a solution of (\ref{eq102}) in $\Omega_t$, $\partial\Omega_0$ be a smooth, closed and strictly convex hypersurface in $\mathbb{R}^n$ containing the origin in its interior and $f$ is positive smooth function on $S^{n-1}$. Then the flow (\ref{eq109}) has an unique smooth non-even solution $\partial\Omega_t=X(S^{n-1},t)$ for $t\in(0,\infty)$. Moreover, when $t\rightarrow\infty$, there is a subsequence of $\partial\Omega_t$ converges in $C^{\infty}$ to a smooth, closed and strictly convex hypersurface $\Omega_\infty$ whose support function satisfies (\ref{eq108}).
\end{theorem}

This paper is organized as follows. We collect some necessary background materials in Section \ref{sec2}. In Section \ref{sec3}, we give the scalar form  of flow (\ref{eq109}) by
the support function and discuss the properties of two important geometric functionals along the flow (\ref{eq109}).
In Section \ref{sec4}, we give the priori estimates for the solution to the flow (\ref{eq109}). We obtain the
convergence of the flow (\ref{eq109}) and complete the proof of Theorem \ref{thm13} in Section \ref{sec5}.

\section{\bf Preliminaries}\label{sec2}
In this subsection, we give a brief review of some relevant notions and terminologies about
convex bodies and recall some basic properties of convex hypersurfaces that one can  refer
to \cite{UR} and a book of Schneider \cite{SC} for details.
\subsection{Convex bodies} Let $\mathbb{R}^n$ be the $n$-dimensional Euclidean space
and $\partial\Omega$ be a smooth, closed and strictly convex hypersurface containing the origin in its interior. The support function of convex body $\Omega$ enclosed by $\partial\Omega$ is defined by
\begin{align*}h_\Omega(\xi)=h(\Omega,\xi)=\max\{\xi\cdot y:y\in\Omega\},\quad \forall\xi\in S^{n-1},\end{align*}
and the radial function of $\Omega$ with respect to $o$ (origin) $\in\mathbb{R}$ is defined by
\begin{align*}\rho_{\Omega}(v)=\rho(\Omega,v)=\max\{c>0:cv\in\Omega\},\quad  v\in S^{n-1}.\end{align*}
The volume $Vol(\Omega)$ of $\Omega$ is defined by
\begin{align}\label{eq201}Vol(\Omega)=\frac{1}{n}\int_{S^{n-1}}\rho(\Omega,v)^ndv=\frac{1}{n}\int_{S^{n-1}}h(\Omega,\xi)dS(\Omega,\xi).\end{align}

For a compact convex subset $\Omega\in \mathcal{K}^n$ and $\xi\in S^{n-1}$, the intersection of a
supporting hyperplane with $\Omega$, $H(\Omega,\xi)$ at $\xi$ is given by
\begin{align*}H(\Omega,\xi)=\{y\in \Omega:y\cdot \xi=h_\Omega(\xi)\}.\end{align*}
A boundary point of $\Omega$ which only has one supporting hyperplane is called a regular point, otherwise, it is a singular point. The set of singular points is denoted as $\sigma \Omega$, it is
well known that $\sigma \Omega$ has spherical Lebesgue measure 0.

For $y\in\partial \Omega\setminus \sigma \Omega$, its Gauss map $g_\Omega:y\in\partial \Omega\setminus \sigma \Omega\rightarrow S^{n-1}$ is represented by
\begin{align*}g_\Omega(y)=\{\xi\in S^{n-1}:y\cdot \xi=h_\Omega(\xi)\}.\end{align*}
Correspondingly, for a Borel set $\eta\subset S^{n-1}$, its inverse Gauss map is denoted by
$g_\Omega^{-1}$,
\begin{align*}g_\Omega^{-1}(\eta)=\{y\in\partial \Omega:g_\Omega(y)\in\eta\}.\end{align*}
Specially, for a convex hypersurface $\partial\Omega$ of class $C^2$, then the support function of $\Omega$ can be stated as
\begin{align*}
h(\Omega,x)=x\cdot g^{-1}(x)=g(X(x))\cdot X(x), \quad X(x)\in \partial\Omega.
\end{align*}
Moreover, the gradient of $h(\Omega, \cdot)$ satisfies
\begin{align}\label{eq202}
\nabla h(\Omega,x)=g^{-1}(x),
\end{align}
where $\nabla$ is the gradient on $\mathbb{R}^n$.

For the Borel set $\eta\subset S^{n-1}$, its surface area measure is defined as
\begin{align*}S_\Omega(\eta)=\mathcal{H}^n(g_\Omega^{-1}(\eta)),\end{align*}
where $\mathcal{H}^n$ is the $n$-dimensional Hausdorff measure.

\subsection{Gauss curvature on convex hypersurface }~~Suppose that $\Omega$ is parameterized by the inverse
Gauss map $X:S^{n-1}\rightarrow \Omega$, that is $X(x)=g_\Omega^{-1}(x)$. Then the support function $h$ of $\Omega$ can be computed by
\begin{align}\label{eq203}h(x)=x\cdot X(x) , \ \ x\in S^{n-1},\end{align}
where $x$ is the outer normal of $\Omega$ at $X(x)$. Let $\overline{\nabla}$ be the gradient on $S^{n-1}$ and $\{e_1, e_2, \cdots, e_{n-1}\}$ be an orthonormal frame on $S^{n-1}$, denote  $e_{ij}$ by the standard metric on the sphere $S^{n-1}$.
Differentiating (\ref{eq203}), there has
\begin{align*}\overline{\nabla}_ih=\overline{\nabla}_ix\cdot X(x)+ x\cdot \overline{\nabla}_iX(x),\end{align*}
since $\overline{\nabla}_iX(x)$ is tangent to $\Omega$ at $X(x)$, thus,
\begin{align*}\overline{\nabla}_ih=\overline{\nabla}_ix\cdot X(x).\end{align*}

By differentiating (\ref{eq203}) twice, the second fundamental form $A_{ij}$ of $\Omega$ can be computed in terms of the support function,
\begin{align}\label{eq204}A_{ij} = \overline{\nabla}_{ij}h + he_{ij},\end{align}
where $\overline{\nabla}_{ij}=\overline{\nabla}_i\overline{\nabla}_j$ denotes the second order covariant derivative with respect to $e_{ij}$. The induced metric matrix $g_{ij}$ of $\Omega$ can be derived by Weingarten's formula,
\begin{align}\label{eq205}e_{ij}=\overline{\nabla}_ix\cdot \overline{\nabla}_jx= A_{ik}A_{lj}g^{kl}.\end{align}
The principal radii of curvature are the eigenvalues of the matrix $b_{ij} = A^{ik}g_{jk}$.
When considering a smooth local orthonormal frame on $S^{n-1}$, by virtues of (\ref{eq204})
and (\ref{eq205}), there is
\begin{align}\label{eq206}b_{ij} = A_{ij} = \overline{\nabla}_{ij}h + h\delta_{ij}.\end{align}
Then the Gauss curvature $\mathcal{K}(x)$ of $X(x)\in\partial\Omega$ is given by
\begin{align}\label{eq207}\mathcal{K}(x) = (\det (\overline{\nabla}_{ij}h + h\delta_{ij} ))^{-1}.\end{align}

\section{\bf Geometric flow and its associated functionals}\label{sec3}
In this subsection, we will introduce the geometric flow and its associated functionals for solving the Orlicz Minkowski problem for $q$-torsional rigidity with $q>1$. For convenience, the Gauss curvature flow is restated
here. Let $\partial\Omega_0$ be a smooth, closed and strictly convex hypersurface in $\mathbb{R}^n$ containing the origin in its interior, $f$ be a positive smooth function on $S^{n-1}$. We consider the following Gauss curvature flow
\begin{align}\label{eq301}
\left\{
\begin{array}{lc}
\frac{\partial X(x,t)}{\partial t}=-\lambda(t)f(\nu)\frac{(X\cdot \nu)}{|\nabla u(X,t)|^q\varphi(X\cdot \nu)}\mathcal{K}
(x,t)\nu+X(x,t),  \\
X(x,0)=X_0(x),\\
\end{array}
\right.
\end{align}
where $\mathcal{K}(x,t)$ is the Gauss curvature of the hypersurface $\partial\Omega_t$ at $X(\cdot,t)$, $\nu=x$ is the unit outer
normal vector of $\partial\Omega_t$ at $X(\cdot,t)$, $X\cdot \nu $ represents standard inner product of $X$ and $\nu$, and $\lambda(t)$ is given by
\begin{align}\label{eq302}\lambda(t)=\frac{\int_{S^{n-1}}|\nabla u(X,t)|^q\rho^nd\xi}{\int_{S^{n-1}}\frac{hf}{\varphi}dx}.\end{align}

Taking the scalar product of both sides of the equation and of the initial condition in
(\ref{eq301}) by $\nu$, by means of the definition of support function (\ref{eq203}) and formula (\ref{eq202}) , we describe the flow (\ref{eq301}) with the support function as follows
\begin{align}\label{eq303}\left\{
\begin{array}{lc}
\frac{\partial h(x,t)}{\partial t}=-\lambda(t)f(x)\frac{h(x,t)}{|\nabla u(\nabla h,t)|^q\varphi(h)}\mathcal{K}
(x,t)+h(x,t),  \\
h(x,0)=h_0(x).\\
\end{array}
\right.\end{align}

Next, we investigate the characteristics of two essential geometric functionals with respect to Eq. (\ref{eq303}). Firstly, we show the $q$-torsional rigidity unchanged along the flow (\ref{eq301}). The conclusion can be stated as the following lemma.

\begin{lemma}\label{lem31} Let $q>1$, the $q$-torsional rigidity $T_q(\Omega_t)$ is unchanged along the flow (\ref{eq301}) for $t\in[0,T)$, i.e.
\begin{align*}T_q(\Omega_t)=T_q(\Omega_0).\end{align*}
\end{lemma}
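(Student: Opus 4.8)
The plan is to differentiate $\widetilde{T}_q(\Omega_t) = T_q(\Omega_t)^{1/(q-1)}$ in $t$ using the variational formula \eqref{eq105}, and show that the choice of the speed function and of $\lambda(t)$ in \eqref{eq301}--\eqref{eq302} is precisely engineered to make this derivative vanish. Since $\widetilde{T}_q(\Omega_t) = \frac{q-1}{q+n(q-1)}\int_{S^{n-1}} h(x,t)\, |\nabla u(\nabla h,t)|^q\, dS(\Omega_t,x)$, and the surface area measure satisfies $dS(\Omega_t,x) = \det(\nabla_{ij}h + h\delta_{ij})\,dx = \mathcal{K}(x,t)^{-1}\,dx$ by \eqref{eq207}, I would first rewrite the $q$-torsional rigidity as an integral over the fixed domain $S^{n-1}$:
\begin{align*}
\widetilde{T}_q(\Omega_t) = \frac{q-1}{q+n(q-1)}\int_{S^{n-1}} h(x,t)\,|\nabla u(\nabla h,t)|^q\,\frac{1}{\mathcal{K}(x,t)}\,dx.
\end{align*}
The Hadamard-type variational formula (the same one underlying \eqref{eq105}, due to the Pohozaev identities of \cite{PU}) tells us that when the support function evolves, the first variation of $\widetilde{T}_q$ only sees the variation of $h$ against the $q$-torsional measure $d\mu^{tor}_q = |\nabla u|^q\,dS$; that is,
\begin{align*}
\frac{d}{dt}\widetilde{T}_q(\Omega_t) = \int_{S^{n-1}} \frac{\partial h(x,t)}{\partial t}\,|\nabla u(\nabla h,t)|^q\,\frac{1}{\mathcal{K}(x,t)}\,dx,
\end{align*}
after absorbing the constant — this is exactly the content of Definition \ref{def12}/\ref{def13} specialized to the linear perturbation $\varphi(s)=s$. (One must be a little careful: the variation also acts on $u$ itself through the domain, but the Pohozaev/Hadamard identity says that contribution is a total that reorganizes into the boundary integral above; I would cite \cite{PU, HJ2} for this.)

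Next I would substitute the flow equation \eqref{eq303} for $\partial h/\partial t$ into this expression. Writing $\Psi(x,t) = |\nabla u(\nabla h,t)|^q$ for brevity, the integrand becomes
\begin{align*}
\left(-\lambda(t) f(x)\frac{h(x,t)}{\Psi(x,t)\,\varphi(h)}\mathcal{K}(x,t) + h(x,t)\right)\Psi(x,t)\,\frac{1}{\mathcal{K}(x,t)},
\end{align*}
which splits as
\begin{align*}
-\lambda(t)\int_{S^{n-1}}\frac{f(x)\,h(x,t)}{\varphi(h(x,t))}\,dx \;+\; \int_{S^{n-1}} h(x,t)\,\Psi(x,t)\,\frac{1}{\mathcal{K}(x,t)}\,dx.
\end{align*}
The second integral is, by \eqref{eq207} and the change of variables from the Gauss map to the radial parametrization, exactly $\int_{S^{n-1}} |\nabla u(X,t)|^q \rho^n\,d\xi$ up to the correct normalization — here one uses that $h\,\mathcal{K}^{-1}\,dx = h\,dS(\Omega_t,\cdot)$ and the identity $\int h\,dS = \int \rho^n\,d\xi$ between the two representations of a volume-type integral in \eqref{eq201}, applied with the weight $|\nabla u|^q$. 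Therefore, by the very definition of $\lambda(t)$ in \eqref{eq302}, the two terms cancel and $\frac{d}{dt}\widetilde{T}_q(\Omega_t)=0$, hence $\widetilde{T}_q$, and so $T_q$, is constant along the flow; evaluating at $t=0$ gives $T_q(\Omega_t)=T_q(\Omega_0)$.

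The main obstacle, and the step deserving the most care, is the first variation formula for $\widetilde{T}_q$ along the flow: one must justify that differentiating $\int h\,|\nabla u|^q\,dS$ in $t$ produces only the term with $\partial_t h$ weighted by $d\mu^{tor}_q$, i.e. that the variation of the solution $u$ to the $q$-Laplace problem \eqref{eq102} and of the surface measure combine to reproduce \eqref{eq105} infinitesimally. This is precisely where the Pohozaev-type identities of \cite{PU} and the smooth-category variational computations of Hu--Zhang \cite{HJ2} are invoked; since $\partial\Omega_t$ stays smooth and uniformly convex for $t>0$ (established later in the paper), the regularity needed for these formulas is in place, and the identity \eqref{eq104} relating $\int u\,dX$, $\int|\nabla u|^q\,dX$ and $T_q$ can be used to reconcile the various forms. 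A secondary technical point is verifying the change of variables $\int_{S^{n-1}} h\,|\nabla u|^q\,\mathcal{K}^{-1}\,dx = \int_{S^{n-1}} |\nabla u|^q \rho^n\,d\xi$; this follows from the standard relation $h\,dS(\Omega,\cdot) \leftrightarrow \rho^n\,d\xi$ under the diffeomorphism $v \mapsto g_\Omega(\rho(v)v)$ of $S^{n-1}$, with the integrand $|\nabla u|^q$ being a genuine function on $\partial\Omega$ (hence transforming correctly), but it should be stated explicitly so the cancellation against $\lambda(t)$ is transparent.
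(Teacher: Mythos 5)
Your proposal is correct and follows essentially the same route as the paper: apply the Hadamard-type variational formula (the paper invokes Proposition 2.5 of \cite{HY0}; you cite \cite{PU,HJ2}, which contain the same identity) to reduce $\frac{d}{dt}\widetilde{T}_q$ to $\frac{q-1}{q+n(q-1)}\int_{S^{n-1}}\partial_t h\,|\nabla u|^q\mathcal{K}^{-1}\,dx$, substitute the scalar flow equation, and cancel against $\lambda(t)$ using the Jacobian relation $\rho^n\mathcal{K}\,d\xi=h\,dx$. The only cosmetic difference is that the paper differentiates $T_q=\widetilde{T}_q^{\,q-1}$ directly, picking up the extra factor $T_q^{(q-2)/(q-1)}$, whereas you work with $\widetilde{T}_q$; both lead to the same cancellation.
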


\begin{proof}
Let $h(\cdot,t)$ and $\rho(\cdot,t)$ be the support function and radial function of $\Omega_t$, respectively. $u(X,t)$ is the solution of (\ref{eq102}) in $\Omega_t$. The proposition 2.5 in \cite{HY0} tells us that
\begin{align*}\frac{\partial}{\partial t}T_q(\Omega_t)=&\frac{\partial}{\partial t}\bigg[\bigg(\frac{q-1}{q+n(q-1)}\int_{S^{n-1}}h(x,t)|\nabla u|^q\mathcal{K}^{-1}dx\bigg)^{q-1}\bigg]\\
=&(q-1)T_q^{\frac{q-2}{q-1}}\int_{S^{n-1}}\frac{\partial h(x,t)}{\partial t}|\nabla u|^q\mathcal{K}^{-1}dx.
\end{align*}
Thus, from (\ref{eq302}), (\ref{eq303}) and $\rho^n\mathcal{K}d\xi=hdx$, we have

\begin{align*}
\frac{\partial}{\partial t}T_q(\Omega_t)=&(q-1)T_q^{\frac{q-2}{q-1}}\int_{S^{n-1}}\bigg(-\lambda(t)\frac{fh\mathcal{K}}{|\nabla u|^q\varphi}+h\bigg)|\nabla u|^q\mathcal{K}^{-1}dx\\
=&(q-1)T_q^{\frac{q-2}{q-1}}\bigg(-\frac{\int_{S^{n-1}}|\nabla u|^q\rho^nd\xi}{\int_{S^{n-1}}hf/\varphi dx}\int_{S^{n-1}}\frac{fh\mathcal{K}}{|\nabla u|^q\varphi}|\nabla u|^q\mathcal{K}^{-1}dx\\
&+\int_{S^{n-1}}h|\nabla u|^q\mathcal{K}^{-1}dx\bigg)\\
=&(q-1)T_q^{\frac{q-2}{q-1}}\bigg(-\int_{S^{n-1}}|\nabla u|^qh\mathcal{K}^{-1}dx+\int_{S^{n-1}}|\nabla u|^qh\mathcal{K}^{-1}dx\bigg)\\
=&0.
\end{align*}
This ends the proof of Lemma \ref{lem31}.
\end{proof}

The next lemma will show that the functional (\ref{eq110}) is non-increasing along the flow (\ref{eq301}).
\begin{lemma}\label{lem32}
The functional (\ref{eq110}) is non-increasing along the flow (\ref{eq301}). Namely, $\frac{\partial}{\partial t}\Gamma(\Omega_t)\leq0$, the equality holds if and only if $\Omega_t$ satisfies (\ref{eq108}).
\end{lemma}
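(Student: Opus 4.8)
The plan is to differentiate $\Gamma(\Omega_t)=\int_{S^{n-1}}f\phi(h)\,dx$ in $t$, substitute the evolution equation \eqref{eq303} for $\partial_t h$, and show the resulting integral is nonpositive by recognizing it as a difference of two mixed-type quantities controlled by an integral Cauchy--Schwarz (or H\"older) inequality, with equality characterizing \eqref{eq108}. First I would compute, using $\phi'(s)=1/\varphi(s)$,
\begin{align*}
\frac{d}{dt}\Gamma(\Omega_t)=\int_{S^{n-1}}f(x)\,\frac{1}{\varphi(h)}\,\frac{\partial h}{\partial t}\,dx
=\int_{S^{n-1}}\frac{f}{\varphi(h)}\Bigl(-\lambda(t)\frac{fh\mathcal{K}}{|\nabla u|^q\varphi(h)}+h\Bigr)dx,
\end{align*}
so that
\begin{align*}
\frac{d}{dt}\Gamma(\Omega_t)=-\lambda(t)\int_{S^{n-1}}\frac{f^2 h\,\mathcal{K}}{|\nabla u|^q\varphi(h)^2}\,dx+\int_{S^{n-1}}\frac{fh}{\varphi(h)}\,dx.
\end{align*}

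Next I would insert the explicit value of $\lambda(t)$ from \eqref{eq302}, namely $\lambda(t)=\bigl(\int_{S^{n-1}}|\nabla u|^q\rho^n\,d\xi\bigr)\big/\bigl(\int_{S^{n-1}}\tfrac{fh}{\varphi}\,dx\bigr)$, and use the identity $\rho^n\mathcal{K}\,d\xi=h\,dx$ (already used in the proof of Lemma \ref{lem31}) to rewrite $\int_{S^{n-1}}|\nabla u|^q\rho^n\,d\xi=\int_{S^{n-1}}|\nabla u|^q h\,\mathcal{K}^{-1}\,dx$. Writing $d\sigma:=h\,\mathcal{K}^{-1}\,dx$ as a (positive) measure on $S^{n-1}$, the inequality $\frac{d}{dt}\Gamma(\Omega_t)\le0$ becomes
\begin{align*}
\Bigl(\int_{S^{n-1}}\frac{f}{\varphi(h)}\,\frac{\mathcal{K}}{|\nabla u|^q}\cdot\frac{f\,\mathcal{K}}{\varphi(h)}\Bigr)\Bigl(\int_{S^{n-1}}|\nabla u|^q\,d\sigma\Bigr)\ \text{vs.}\ \Bigl(\int_{S^{n-1}}\frac{f\,\mathcal{K}}{\varphi(h)}\,d\sigma\Bigr)^2,
\end{align*}
which is exactly the Cauchy--Schwarz inequality applied to the functions $|\nabla u|^{q/2}$ and $\frac{f\mathcal{K}}{\varphi(h)}|\nabla u|^{-q/2}$ against $d\sigma$. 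Thus $\frac{d}{dt}\Gamma(\Omega_t)\le0$. The equality case of Cauchy--Schwarz forces $|\nabla u|^q$ to be a constant multiple of $\frac{f\mathcal{K}}{\varphi(h)|\nabla u|^q}\cdot|\nabla u|^q$-type expression; unwinding this gives $\tau\varphi(h)|\nabla u|^q\mathcal{K}^{-1}=f$ for a constant $\tau$, i.e. $\tau\varphi(h)|\nabla u|^q\det(\nabla_{ij}h+h\delta_{ij})=f$, which is precisely \eqref{eq108} by \eqref{eq207}. Conversely, if \eqref{eq108} holds one checks directly that $\partial_t h\equiv0$ up to the scaling, hence $\frac{d}{dt}\Gamma=0$.

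The routine parts are the differentiation under the integral sign (justified by smoothness and the uniform bounds that will be established in Section \ref{sec4}) and the algebraic bookkeeping with $\lambda(t)$ and the identity $\rho^n\mathcal{K}\,d\xi=h\,dx$. The one point requiring a little care is the equality analysis: I must make sure the Cauchy--Schwarz equality condition is stated in a form that genuinely reproduces \eqref{eq108} with a \emph{single} global constant $\tau$ (rather than merely pointwise proportionality with a possibly varying factor), and check that the constant that appears matches the normalization in \eqref{eq107}; the main obstacle is therefore not an estimate but ensuring this final identification is clean and reversible.
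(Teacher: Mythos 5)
Your proposal follows essentially the same route as the paper: differentiate $\Gamma$, use $\phi'=1/\varphi$, substitute the evolution equation \eqref{eq303}, insert $\lambda(t)$ together with the change of variable $\rho^n\mathcal{K}\,d\xi=h\,dx$, and close with Cauchy--Schwarz, the equality case then reproducing \eqref{eq108} via \eqref{eq207}. The worry you raise at the end is in fact a non-issue: the equality case of Cauchy--Schwarz (applied to $|\nabla u|^{q/2}$ and $\frac{f\mathcal{K}}{\varphi(h)}|\nabla u|^{-q/2}$ against $d\sigma$) already yields pointwise proportionality with a \emph{single} constant over all of $S^{n-1}$, and plugging that relation back into \eqref{eq302} identifies it as $\tau=1/\lambda(t)$, exactly as in the paper.
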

\begin{proof}
By (\ref{eq110}), (\ref{eq302}), (\ref{eq303}), $\rho^n\mathcal{K}d\xi=hdx$ and the H\"{o}lder inequality, we obtain the following result,
\begin{align*}
&\frac{\partial}{\partial t}\Gamma(\Omega_t)\\
=&\int_{S^{n-1}}f(x) \phi'(h(x,t))\frac{\partial h}{\partial t}dx\\
=&\int_{S^{n-1}}\bigg(-\lambda(t)\frac{f(x)h}{|\nabla u|^q\varphi(h)}\mathcal{K}
+h\bigg)\frac{f(x)}{\varphi(h)}dx\\
=&-\lambda(t)\int_{S^{n-1}}\frac{f^{2}(x)h}{|\nabla u|^q\varphi^2(h)}\mathcal{K}dx+\int_{S^{n-1}}\frac{hf(x)}{\varphi(h)}dx\\
=&-\frac{\int_{S^{n-1}}|\nabla u|^q\frac{h}
{\mathcal{K}}dx}{\int_{S^{n-1}}\frac{hf}{\varphi(h)}dx}\int_{S^{n-1}}\frac{f^{2}(x)h}{|\nabla u|^q\varphi^2(h)}\mathcal{K}dx+\int_{S^{n-1}}\frac{hf(x)}{\varphi(h)}dx\\
=&\bigg(\int_{S^{n-1}}\frac{hf}{\varphi(h)}dx\bigg)^{-1}\bigg\{-\bigg[\bigg(\int_{S^{n-1}}\bigg(|\nabla u|^{\frac{q}{2}}(\frac{h}{\mathcal{K}})^{\frac{1}{2}}\bigg)^2dx\bigg)^{\frac{1}{2}}\bigg(\int_{S^{n-1}}\bigg(\frac{f(x)h^{\frac{1}{2}}\mathcal{K}^{\frac{1}{2}}}{|\nabla u|^{\frac{q}{2}}\varphi(h)}\bigg)^2dx\bigg)^{\frac{1}{2}}\bigg]^2\\
&+\bigg(\int_{S^{n-1}}\frac{hf(x)}{\varphi(h)}dx\bigg)^2\bigg\}\\
\leq&\bigg(\int_{S^{n-1}}\frac{hf(x)}{\varphi(h)}dx\bigg)^{-1}\bigg[\!-\bigg(\!\int_{S^{n-1}}|\nabla u|^\frac{q}{2}(\frac{h}{\mathcal{K}})^{\frac{1}{2}}\times \frac{f(x)h^{\frac{1}{2}}\mathcal{K}^\frac{1}{2}}{|\nabla u|^{\frac{q}{2}}\varphi(h)}dx\bigg)^2+\bigg(\!\int_{S^{n-1}}\frac{hf(x)}{\varphi(h)}dx\bigg)^2\bigg]\\
=&0.
\end{align*}

By the equality condition of H\"{o}lder inequality, we know that the above equality holds if and only if $f=\tau\varphi(h)\mathcal{K}^{-1}|\nabla u|^q$, i.e.,
$$\tau\varphi(h)|\nabla u|^q\det(\nabla_{ij}h+h\delta_{ij})=f.$$
Namely, $\Omega_t$ satisfies (\ref{eq108}) with $\frac{1}{\tau}=\lambda(t)$.
\end{proof}

\section{\bf Priori estimates}\label{sec4}

In this subsection, we establish the $C^0, C^1$ and $C^2$ estimates for the solution to Eq. (\ref{eq303}). In the following of this paper, we always assume that $\partial\Omega_0$ is a smooth, closed and strictly convex hypersurface in $\mathbb{R}^n$ containing the origin in its interior, $h:S^{n-1}\times [0,T)\rightarrow \mathbb{R}$ is a smooth non-even solution to Eq. (\ref{eq303}) with the initial $h(\cdot,0)$ the support function of $\partial\Omega_0$. Here, $T$ is the maximal time for the existence of smooth non-even solution to Eq. (\ref{eq303}).

\subsection{$C^0, C^1$  estimates}

In order to complete the $C^0$ estimate, we firstly need to introduce the following lemma.

\begin{lemma}\label{lem42} Let $u\in W_{loc}^{1,q}(\Omega)$ be a local weak solution of
\begin{align*}
{\rm div}(|\nabla u|^{q-2}\nabla u)=\psi, ~~ q>1; ~~ \psi\in L_m^{loc}(\Omega),
\end{align*}
$m>q^\prime n$ $(\frac{1}{q^\prime}+\frac{1}{q}=1)$. Then $u\in C^{1+\alpha}_{loc}(\Omega)$. (see \cite[Corollary in pp. 830]{DE})
\end{lemma}

\begin{lemma}\label{lem43}
Let $q>1$, $\Omega_t$ be a smooth, non-even and strictly convex solution to the flow (\ref{eq301}) in $\mathbb{R}^n$, $u(X,t)$ be a solution of (\ref{eq102}) in $\Omega_t$, $f$ and $\varphi$ be as in Theorem \ref{thm13}. Then there is a positive constant $C$ independent of
$t$ such that
\begin{align}\label{eq401}
\frac{1}{C}\leq h(x,t)\leq C, \ \ \forall(x,t)\in S^{n-1}\times[0,T),
\end{align}
\begin{align}\label{eq402}
\frac{1}{C}\leq \rho(\xi,t)\leq C, \ \ \forall(\xi,t)\in S^{n-1}\times[0,T).
\end{align}
Here, $h(x,t)$ and $\rho(\xi,t)$ are the support function and radial function of $\Omega_t$, respectively.
\end{lemma}
\begin{proof}
Due to  $\rho(\xi,t)\xi=\overline{\nabla} h(x,t)+h(x,t)x$. Clearly, one sees
\begin{align*}
\min_{S^{n-1}} h(x,t)\leq \rho (\xi,t)\leq \max_{S^{n-1}} h(x,t).
\end{align*}
This implies that the estimate (\ref{eq401}) is tantamount to the estimate (\ref{eq402}). Thus, we only need to estimate (\ref{eq401}) or (\ref{eq402}).

Firstly, we derive at the uniform upper bound of $\rho(\xi,t)$, and for this purpose, we need to prove the uniformly lower bound of $|\nabla u(y,t)|$. Here, we use the method of proof by contradiction. We assume there is a sequence $(y,t_k)\in \Omega\times [0,T)$ such that $|\nabla u(y,t_k)|=0$, but ${\rm div}(|\nabla u|^{q-2}\nabla u)=-1$ implies that $|\nabla u|\neq 0$, thus, the assumption $|\nabla u(y,t_k)|=0$ is not valid. Next, we suppose there exists a sequence $(y,t_k)\in \Omega\times [0,T)$ such that $|\nabla u(y,t_k)|\rightarrow 0$, denote $M=|\nabla u|^{q-2}\nabla u$, then ${\rm div}(M)=-1$ from (\ref{eq102}). When $q\geq2$, $|\nabla u|^{q-2}\rightarrow 0$, then $M$ tends to a zero vector. When $1<q<2$, $|\nabla u|^{q-2}\rightarrow +\infty$, but $|M|=|\nabla u|^{q-1}\rightarrow 0$ because of $0<q-1<1$, thus, $M\rightarrow \vec{0}$ with $1<q<2$, too. Therefore, whether $q\geq2$ or $1<q<2$, $M$ is always tends to a zero vector on $(y,t_k)$. Since ${\rm div}(M)=-1$, i.e. $\sum_{i=1}^{n}\frac{\partial M_i}{\partial y_i}=-1$, where $M_i=|\nabla u|^{q-2}\frac{\partial u}{\partial y_i}$. Because $\Omega$ is a convex body, from Lemma \ref{lem42}, we know that $\frac{\partial M_i}{\partial y_i}$ is bounded independent of $t$ in $\Omega$. However, when $M\rightarrow \vec{0}$, ${\rm div}(M)=\sum_{i=1}^{n}\frac{\partial M_i}{\partial y_i}\rightarrow {\rm div}(\vec{0})=0$ by continuity (Lemma \ref{lem42}), this a contradiction with ${\rm div}(M)=-1$. Thus, $|\nabla u(y,t)|$ has a uniformly positive lower bound $c$ independent of $t$.

From Lemma \ref{lem31} and $\rho^nd\xi=h\mathcal{K}^{-1}dx$, we know that
\begin{align*}
T_q(\Omega_0)=T_q(\Omega_t)=&\bigg(\frac{q-1}{q+n(q-1)}\int_{S^{n-1}}h(x,t)|\nabla u|^q\mathcal{K}^{-1}dx\bigg)^{q-1}\\
=&\bigg(\frac{q-1}{q+n(q-1)}\int_{S^{n-1}}\rho^n(\xi,t)|\nabla u|^qd\xi\bigg)^{q-1}.
\end{align*}
Then
\begin{align*}
T_q(\Omega_0)^{\frac{1}{q-1}}=&\frac{q-1}{q+n(q-1)}\int_{S^{n-1}}\rho^n(\xi,t)|\nabla u|^qd\xi\\
\geq&c^q\frac{q-1}{q+n(q-1)}\int_{S^{n-1}}\rho^n(\xi,t)d\xi.
\end{align*}
Here, we define $\rho_{\max_t}(\xi)$ (or $\rho_{\sup_t}(\xi)$) by the maximum value or supremum of $\rho(\xi,t)$ w.r.t. $t$. The foregoing inequality shows that $\rho_{\max_t}(\xi)<+\infty$ (or $\rho_{\sup_t}(\xi)<+\infty$). In fact, if $\rho_{\max_t}(\xi)=+\infty$, then $\int_{S^{n-1}}\rho^n_{\max_t}(\xi)d\xi=+\infty$ from the continuity of $\rho$ on $S^{n-1}$, this is a contradictory with
\begin{align*}
\int_{S^{n-1}}\rho^n_{\max_t}(\xi)d\xi\leq\frac{q+n(q-1)}{c^q(q-1)}T_q(\Omega_0)^{\frac{1}{q-1}}=\widetilde{C}~(\text{a positive constant}).
\end{align*}
The same conclusion also applies to $\rho_{\sup_t}(\xi)$. This implies that there is a positive constant $C$ such that $\rho_{\max_t}(\xi)\leq C<+\infty$ (or $\rho_{\sup_t}(\xi)\leq C<+\infty$), where $C$ is independent of $t$.

Next, we provide a proof to uniformly lower bound of $\rho(\xi,t)$, and for this purpose, we need to prove the uniformly upper bound of $|\nabla u(X(x,t),t)|$. We have obtained the uniform  upper bounds of the convex bodies generated by $\partial \Omega_t=X(S^{n-1},t)$. Thus, there ia a ball $B_R$ tangent to $\partial\Omega_t$ at $X(x)$ with $\Omega_t\subset B_R$ for $t\in[0,T)$ and $X(x)\in\partial\Omega_t$. Let $u_R$ be a solution of (\ref{eq102}) in $B_R$. By the comparison principle, $u(\cdot,t)\leq u_R(\cdot)$ in $\Omega_t$, and $|\nabla u(X(x,t),t)|\leq |\nabla u_R(X(x))|$. In addition, we know that $|\nabla u_R(X(x))|\leq \widehat{C}$ by Lemma \ref{lem42}. Consequently, there exists a positive constant $C_1$ independent of $t$ such that
\begin{align}\label{eq403}
|\nabla u (X(x,t),t)|\leq C_1, \quad \forall (x,t)\in S^{n-1}\times [0,T),
\end{align}
where $C_1$ is independent of $t$.

Similar to the proof of upper bound for $\rho(\xi,t)$, we define $\rho_{\min_t}(\xi)$ (or $\rho_{\inf_t}(\xi)$) by the minimum value or infimum of $\rho(\xi,t)$ w.r.t. $t$, then for any $x\in S^{n-1}$, there is
\begin{align*}
T_q(\Omega_0)^{\frac{1}{q-1}}=&\frac{q-1}{q+n(q-1)}\int_{S^{n-1}}\rho^n(\xi,t)|\nabla u|^qd\xi\\
\leq &C_1^q\frac{q-1}{q+n(q-1)}\int_{S^{n-1}}\rho^n(\xi,t)d\xi.
\end{align*}
The foregoing inequality shows that $\rho_{\min_t}(\xi)>0$ (or $\rho_{\inf_t}(\xi)>0$). In fact, if $\rho_{\min_t}(\xi)=0$, then $\int_{S^{n-1}}\rho^n_{\min_t}(\xi)d\xi=0$ from the continuity of $\rho$ on $S^{n-1}$, this is a contradictory with
\begin{align*}
\int_{S^{n-1}}\rho^n_{\min_t}(\xi)d\xi\geq\frac{q+n(q-1)}{C_1^q(q-1)}T_q(\Omega_0)^{\frac{1}{q-1}}=\tilde{c}~(\text{a positive constant}).
\end{align*}
The same conclusion also applies to $\rho_{\inf_t}(\xi)$. This implies that there is a positive constant $\delta$ such that $\rho_{\min_t}(\xi)\geq\delta>0$ (or $\rho_{\inf_t}(\xi)\geq\delta>0$), where $\delta$ is independent of $t$.
\end{proof}

\begin{lemma}\label{lem44}Let $q>1$, $\Omega_t$ be a smooth, non-even and strictly convex solution to the flow (\ref{eq301}) in $\mathbb{R}^n$, $u(X,t)$ be a solution of (\ref{eq102}) in $\Omega_t$, $f$ and $\varphi$ satisfy conditions of Theorem \ref{thm13}. Then there is a positive constant $C$ independent of $t$ such that
\begin{align}\label{eq404}|\overline{\nabla} h(x,t)|\leq C,\quad\forall(x,t)\in S^{n-1}\times [0,T),\end{align}
and
\begin{align}\label{eq405}|\overline{\nabla} \rho(\xi,t)|\leq C,\quad \forall(\xi,t)\in S^{n-1}\times [0,T).\end{align}
\end{lemma}

\begin{proof}
The desired results immediately follows from Lemma \ref{lem43} and the following identities (see e.g. \cite{LR})
\begin{align*}
h=\frac{\rho^2}{\sqrt{\rho^2+|\overline{\nabla}\rho|^2}},\qquad\rho^2=h^2+|\overline{\nabla} h|^2.\end{align*}
\end{proof}

\begin{lemma}\label{lem45} Under the same conditions as the Lemma \ref{lem43}, there always exists a positive constant $C$ independent of $t$, such that
\begin{align*}\frac{1}{C}\leq\lambda(t)\leq C,\quad t\in [0,T).\end{align*}
\end{lemma}

\begin{proof} From the definition of $\lambda(t)$, $\rho^nd\xi=h\mathcal{K}^{-1}dx$, (\ref{eq105}) and Lemma \ref{lem31}, we have
\begin{align*}
\lambda(t)=\frac{\int_{S^{n-1}}|\nabla u(X,t)|^q\rho^nd\xi}{\int_{S^{n-1}}\frac{hf}{\varphi}dx}=\frac{\frac{q+n(q-1)}{q-1}T_q(\Omega_t)^{\frac{1}{q-1}}}{\int_{S^{n-1}}\frac{hf}{\varphi(h)}dx}=\frac{\frac{q+n(q-1)}{q-1}T_q(\Omega_0)^{\frac{1}{q-1}}}{\int_{S^{n-1}}\frac{hf}{\varphi(h)}dx},
\end{align*}
the conclusion of this result is directly obtained from Lemma \ref{lem43}.
\end{proof}

\begin{remark}\label{rem46} From the proof of Lemma \ref{lem43}, we know that $|\nabla u(X(x,t),t)|$ has uniformly positive upper and lower bounds. In the same time, by virtue of Schauder’s theory (see example Chapter 6 in \cite{GI}), there is a positive constant $\widehat{C}$ independent of $t$ satisfying that
\begin{align}\label{eq406}
|\nabla^k u (X(x,t),t)|\leq \overline{C}, \quad \forall (x,t)\in S^{n-1}\times [0,T),
\end{align}
for all integer $k\geq 2$.
\end{remark}

\subsection{$C^2$ estimate}

In this subsection, we establish the upper and lower bounds of principal curvature. This will shows
that Eq. (\ref{eq303}) is uniformly parabolic. The technique used in this proof was first introduced by Tso \cite{TK} to derive the upper bound of the Gauss curvature. We begin with completing the following results which will be need in $C^2$ estimate.

\begin{lemma}\label{lem47} Let $\Omega_t$ be a convex body of $C_+^2$ in $\mathbb{R}^n$, and $u(X(x,t),t)$ be the solution of (\ref{eq102}) with $q>1$ in $\Omega_t$, then
\begin{flalign*}
\begin{split}
(i)&(\nabla^2u(X(x,t),t)e_i)\cdot e_j=-\mathcal{K}|\nabla u(X(x,t),t)|c_{ij}(x,t);\\
(ii)&(\nabla^2u(X(x,t),t)e_i)\cdot x=-\mathcal{K}|\nabla u(X(x,t),t)|_jc_{ij}(x,t);\\
(iii)&(\nabla^2u(X(x,t),t)x)\cdot x=\frac{1}{q-1}\bigg(\mathcal{K}|\nabla u|{\rm Tr}(c_{ij}(h_{ij}+h\delta_{ij}))-|\nabla u|^{2-q}\bigg).
\end{split}&
\end{flalign*}
Here, $e_i$ and $x$ are orthonormal frame and unite outer normal on $S^{n-1}$, $\cdot$ is standard inner product and $c_{ij}$ is the cofactor matrix of $(h_{ij}+h\delta_{ij})$ with $\sum_{i,j}c_{ij}(h_{ij}+h\delta_{ij})=(n-1)\mathcal{K}^{-1}$.
\end{lemma}
\begin{proof} Similar conclusions have been presented in some references, for example \cite{HJ}. Here, we will briefly state the proofs combining with our problem.

(i)~~Assume that $h(x,t)$ is the support function of $\Omega_t$ for $(x,t)\in S^{n-1}\times [0,T)$ and let $\iota=\frac{\partial h}{\partial t}$. Then $X(x,t)=h_ie_i+hx, \frac{\partial X(x,t)}{\partial t}=\dot{X}(x,t)=\frac{\partial}{\partial t}(h_ie_i+hx)=\iota_ie_i+\iota x$. $X_i(X,t)=(h_{ij}+h\delta_{ij})e_j$, let $h_{ij}+h\delta_{ij}=\omega_{ij}$, then $X_{ij}(x,t)=\omega_{ijk}e_k-\omega_{ij}x$, where $\omega_{ijk}$ is the corariant derivatives of $\omega_{ij}$.

From $u(X,t)=0$ on $\partial\Omega_t$, we can not difficult to obtain
$$\nabla u\cdot X_i=0,$$
and
$$((\nabla^2u)X_j)X_i+\nabla u X_{ij}=0.$$
It follows that
\begin{align}\label{eq407}
\omega_{ik}\omega_{jl}(((\nabla^2u)e_l)\cdot e_k)+\omega_{ij}|\nabla u|=0.
\end{align}
Multiplying both sides of (\ref{eq407}) by $c_{ij}$, we have
\begin{align*}
c_{ij}\omega_{ik}\omega_{jl}(((\nabla^2 u)e_l)\cdot e_k)+\det(h_{ij}+h\delta_{ij})|\nabla u|=0.
\end{align*}
Namely,
\begin{align*}
\delta_{jk}\det(h_{ik}+h\delta_{ik})\omega_{jl}(((\nabla^2 u)e_l)\cdot e_k)+\det(h_{ij}+h\delta_{ij})|\nabla u|=0.
\end{align*}
It yields
\begin{align*}
\omega_{ij}(((\nabla^2 u)e_i)\cdot e_j)+|\nabla u|=0,
\end{align*}
then
\begin{align*}
c_{ij}\omega_{ij}(((\nabla^2 u)e_i)\cdot e_j)+c_{ij}|\nabla u|=0,
\end{align*}
i.e.,
\begin{align*}
\mathcal{K}^{-1}(((\nabla^2 u)e_i)\cdot e_j)+c_{ij}|\nabla u|=0,
\end{align*}
thus,
\begin{align*}
((\nabla^2 u)e_i)\cdot e_j=-c_{ij}\mathcal{K}|\nabla u|.
\end{align*}
This gives proof of (i).

(ii)~~Recall that
\begin{align*}
|\nabla u(X(x,t),t)|=-\nabla u(X(x,t),t)\cdot x,
\end{align*}
taking the covariant of both sides for above formula, we obtain
\begin{align}\label{eq408}
|\nabla u|_j=-\nabla u\cdot e_j-(\nabla^2 u)X_j\cdot x=-\omega_{ij}((\nabla^2 u)e_i\cdot x).
\end{align}
Multiplying both sides of (\ref{eq408}) by $c_{lj}$ and combining
\begin{align*}
c_{lj}\omega_{ij}=\delta_{li}\det(h_{ij}+h\delta_{ij}).
\end{align*}
We conclude that
\begin{align*}
c_{ij}|\nabla u|_j=-\det(h_{ij}+h\delta_{ij})(\nabla^2u)e_i\cdot x.
\end{align*}
Hence,
\begin{align*}
((\nabla^2u)e_i)\cdot x=-\mathcal{K}c_{ij}|\nabla u|_j.
\end{align*}
This proves (ii).

(iii)~~From (\ref{eq102}), we know that
\begin{align*}-1={\rm div}(|\nabla u|^{q-2}\nabla u)=|\nabla u|^{q-2}(\Delta u+\frac{q-2}{|\nabla u|^2}(\nabla^2u\nabla u)\cdot\nabla u),
\end{align*}
then
\begin{align*}
\frac{q-2}{|\nabla u|^2}(\nabla^2 u\nabla u )\cdot\nabla u=-\Delta u-|\nabla u|^{2-q},
\end{align*}
further,
\begin{align*}
&(q-2)((\nabla^2 u) x) \cdot x \\
&=-\Delta u-|\nabla u|^{2-q}\\
&=-{\rm Tr}(\nabla^2u)-|\nabla u|^{2-q}\\
&=-\sum_{i}((\nabla^2u) e_i) \cdot e_j-((\nabla^2u)x) \cdot x-|\nabla u|^{2-q}\\
&=\mathcal{K}|\nabla u|{\rm Tr}(c_{ij}(h_{ij}+h\delta_{ij}))-((\nabla^2u)x)\cdot x-|\nabla u|^{2-q},
\end{align*}
hence,
\begin{align*}
(q-1)((\nabla^2u)x) \cdot x=\mathcal{K}|\nabla u|{\rm Tr}(c_{ij}(h_{ij}+h\delta_{ij}))-|\nabla u|^{2-q},
\end{align*}
consequently,
\begin{align*}
((\nabla^2u) x) \cdot x=\frac{1}{q-1}\bigg(\mathcal{K}|\nabla u|{\rm Tr}(c_{ij}(h_{ij}+h\delta_{ij}))-|\nabla u|^{2-q}\bigg).
\end{align*}
This completes the proof of (iii).
\end{proof}

By Lemma \ref{lem43} and Lemma \ref{lem44}, if $h$ is a smooth non-even solution of Eq. (\ref{eq303}) on $S^{n-1}\times [0,T)$ and $f, \varphi$ satisfying conditions of Theorem \ref{thm13}, then along the flow for $[0,T), \overline{\nabla} h+hx$, and $h$ are smooth functions whose
ranges are within some bounded domain $\Omega_{[0,T)}$ and bounded interval $I_{[0,T)}$, respectively. Here, $\Omega_{[0,T)}$ and $I_{[0,T)}$ depend only
on the upper and lower bounds of $h$ on $[0,T)$.

\begin{lemma}\label{lem48} Let $q>1$, $\Omega_t$ be a smooth, non-even and strictly convex solution to the flow (\ref{eq301}) in $\mathbb{R}^n$, $u(X,t)$ be a solution of (\ref{eq102}) in $\Omega_t$, $f$ and $\varphi$ be as Theorem \ref{thm13} and $q>1$. Then there is a positive constant $C$ independents $t$ such that the principal curvatures $\kappa_i$ of $\Omega_t$, $i=1,\cdots, n-1$, are bounded from above and below, satisfying
\begin{align*}\frac{1}{C}\leq \kappa_i(x,t)\leq C, \quad\forall (x,t)\in S^{n-1}\times [0,T),\end{align*}
where $C$ depends on $\|f\|_{C^0(S^{n-1})}$, $\|f\|_{C^1(S^{n-1})}$, $\|f\|_{C^2(S^{n-1})}$, $\|\varphi\|_{C^1(I_{[0,T)})}$, $\|\varphi\|_{C^2(I_{[0,T)})}$, \\ $\|h\|_{C^0(S^{n-1}\times [0,T))}$, $\|h\|_{C^1(S^{n-1}\times [0,T))}$ and $\|\lambda\|_{C^0(S^{n-1}\times [0,T))}$.
\end{lemma}

\begin{proof} The proof is divided into two parts: in the first part, we derive an upper bound for the Gauss curvature $\mathcal{K}(x,t)$; in the second part, we give an estimate of bound above for the principal radii $b_{ij}=h_{ij}+h\delta_{ij}$.

Step 1: Prove $\mathcal{K}\leq C$.

Firstly, we construct the following auxiliary function,
\begin{align*}\Theta(x,t)=\frac{\lambda(t)(\varphi|\nabla u|^q)^{-1}\mathcal{K}f(x)h-h}{h-\varepsilon_0}
\equiv\frac{-h_t}{h-\varepsilon_0},\end{align*}
where
\begin{align*}\varepsilon_0=\frac{1}{2}\min_{S^{n-1}\times[0, T)}h(x,t)>0,\quad h_t =\frac{\partial h}{\partial t}.\end{align*}

For any fixed $t\in[0,T)$, we assume that $\Theta(x_0, t)=\max_{S^{n-1}}\Theta(x,t)$ is the spatial maximum of $\Theta$. Then at $(x_0,t)$, we have
\begin{align}\label{eq409}
0=\nabla_i\Theta=\frac{-h_{ti}}{h-\varepsilon_0}+\frac{h_th_i}{(h-\varepsilon_0)^2},
\end{align}
and from (\ref{eq409}), at $(x_0,t)$, we also get
\begin{align}\label{eq410}
\nonumber 0\geq\nabla_{ii}\Theta=&\frac{-h_{tii}}{h-\varepsilon_0}+\frac{h_{ti}h_{i}}{(h-\varepsilon_0)^2}
+\frac{h_{ti}h_{i}+h_th_{ii}}{(h-\varepsilon_0)^2}-\frac{h_th_i(2(h-\varepsilon_0)h_i)}{(h-\varepsilon_0)^4}\\
\nonumber=&\frac{-h_{tii}}{h-\varepsilon_0}+\frac{2h_{ti}h_{i}+h_th_{ii}}
{(h-\varepsilon_0)^2}
-\frac{2h_th_ih_i}{(h-\varepsilon_0)^3}\\
\nonumber=&\frac{-h_{tii}}{h-\varepsilon_0}+\frac{h_th_{ii}}
{(h-\varepsilon_0)^2}
+\frac{2h_{ti}h_i(h-\varepsilon_0)-2h_th_ih_i}{(h-\varepsilon_0)^3}\\
=&\frac{-h_{tii}}{h-\varepsilon_0}+\frac{h_th_{ii}}
{(h-\varepsilon_0)^2}.
\end{align}
From (\ref{eq410}), we obtain
$$-h_{tii}\leq\frac{-h_th_{ii}}{h-\varepsilon_0},$$
hence,
\begin{align}\label{eq411}\nonumber-h_{tii}-h_t\delta_{ii}\leq&\frac{-h_th_{ii}}{h-\varepsilon_0}-h_t\delta_{ii}=\frac{-h_t}
{h-\varepsilon_0}(h_{ii}+(h-\varepsilon_0)\delta_{ii})\\
=&\Theta(h_{ii}+h\delta_{ii}-\epsilon_0\delta_{ii})
=\Theta(b_{ii}-\varepsilon_0\delta_{ii}).\end{align}
At $(x_0,t)$, we also have
\begin{align}\label{eq412}\frac{\partial}{\partial t}\Theta=&\frac{-h_{tt}}{h-\epsilon_0}+\frac{h_t^2}{(h-\epsilon_0)^2}\\
\nonumber=&\frac{f}{h-\epsilon_0}\bigg[\frac{\partial(\lambda(t)(\varphi|\nabla u|^q)^{-1}h)}{\partial t}\mathcal{K}+\lambda(t)(\varphi|\nabla u|^q)^{-1}h\frac{\partial(\det(\overline{\nabla}^2h+hI))^{-1}}{\partial t}\bigg]+\Theta+\Theta^2,\end{align}
where
\begin{align*}
\frac{\partial}{\partial t}((\varphi|\nabla u|^q)^{-1}h)=&-\varphi^{-2}\varphi^{\prime}\frac{\partial h}{\partial t}|\nabla u|^{-q}h-q|\nabla u|^{-(q+1)}\frac{\partial}{\partial t}|\nabla u|\varphi^{-1} h+(\varphi|\nabla u|^q)^{-1}\frac{\partial h}{\partial t},
\end{align*}
and $\varphi^{\prime}$ denotes $\frac{\partial\varphi(s)}{\partial s}$.

According to Lemma 5.3 of \cite{HJ}, it shows that
\begin{align*}
\frac{\partial}{\partial t}|\nabla u|=&-(\nabla^2u)x\cdot \bigg(\frac{\partial h_i}{\partial t}\bigg)e_i-\bigg(\frac{\partial h}{\partial t}\bigg)(\nabla^2 u)x\cdot x-(|\nabla u|^{-1}\nabla u\nabla^2u\cdot x)\bigg(\frac{\partial h}{\partial t}\bigg)-|\nabla u|\bigg(\frac{\partial h}{\partial t}\bigg)\\
=&-(\nabla^2u)x\cdot \bigg(-\Theta_i(h-\epsilon_0)-\Theta h_i\bigg)e_i\\
&+\Theta(h-\epsilon_0)\bigg((\nabla^2 u)x\cdot x+|\nabla u|^{-1}\nabla u\nabla^2u\cdot x+|\nabla u|\bigg)\\
\leq & \Theta(x_0,t)\bigg((\nabla^2u)xh_ie_i+h((\nabla^2 u)x\cdot x+|\nabla u|^{-1}\nabla u\nabla^2u\cdot x+|\nabla u|)\bigg),
\end{align*}
then
\begin{align}\label{eq413}
\frac{\partial}{\partial t}&((\varphi|\nabla u|^q)^{-1}h)\\
\nonumber\leq&-\varphi^{-2}\varphi^{\prime}|\nabla u|^{-q}h(-\Theta(x_0,t)(h-\epsilon_0)\\
\nonumber&-q|\nabla u|^{-(q+1)}\varphi^{-1}h\Theta(x_0,t)\bigg((\nabla^2u)xh_ie_i+h((\nabla^2 u)x\cdot x+|\nabla u|^{-1}\nabla u\nabla^2u\cdot x+|\nabla u|)\bigg)\\
\nonumber&-(\varphi|\nabla u|^q)^{-1}\Theta(x_0,t)(h-\varepsilon_0).
\end{align}
Thus, combining (\ref{eq403}) with Lemma \ref{lem47}, and dropping some negative terms in (\ref{eq413}), we have
\begin{align*}
\frac{\partial}{\partial t}&((\varphi|\nabla u|^q)^{-1}h)\leq-\varphi^{-2}\varphi^{\prime}|\nabla u|^{-q}h\times(-\Theta(x_0,t))(h-\epsilon_0)\leq C_2\Theta(x_0,t).
\end{align*}
And from (\ref{eq302}) and Lemma \ref{lem31}, we know that
\begin{align*}\frac{\partial}{\partial t}(\lambda(t))=&\frac{\partial}{\partial t}\bigg(\frac{\int_{S^{n-1}}
|\nabla u|^q\rho^nd\xi}{\int_{S^{n-1}}hf/\varphi dx}\bigg)\\
=&\frac{\frac{\partial}{\partial t}
\bigg(\int_{S^{n-1}}|\nabla u|^q\rho^nd\xi\bigg)}{\int_{S^{n-1}}hf/\varphi dx}-\frac{\frac{\partial}{\partial t}\bigg(\int_{S^{n-1}}hf/\varphi dx\bigg)\bigg(\int_{S^{n-1}}|\nabla u|^q\rho^nd\xi\bigg)}{\bigg(\int_{S^{n-1}}hf/\varphi dx\bigg)^2}\\
=&-\frac{\frac{\partial}{\partial t}\bigg(\int_{S^{n-1}}hf/\varphi dx\bigg)\bigg(\int_{S^{n-1}}|\nabla u|^q\rho^nd\xi\bigg)}{\bigg(\int_{S^{n-1}}hf/\varphi dx\bigg)^2},\end{align*}
where
\begin{align*}-\frac{\partial}{\partial t}\bigg(\int_{S^{n-1}}hf/\varphi dx\bigg)=&-\int_{S^{n-1}}\frac{\frac{\partial h}{\partial t}f}{\varphi}+\frac{hf\varphi^\prime\frac{\partial h}{\partial t}}{\varphi^2}dx\\
=&\int_{S^{n-1}}\bigg(\frac{f}{\varphi}+\frac{hf\varphi^\prime}{\varphi^2}\bigg)\Theta(h-\varepsilon_0)dx\\
\leq&\Theta(x_0,t)\int_{S^{n-1}}\bigg(\frac{fh}{\varphi}+\frac{h^2f\varphi^\prime}{\varphi^2}\bigg)dx\\
\leq& C_3\Theta(x_0,t).
\end{align*}
Hence,
\begin{align*}\frac{\partial}{\partial t}(\lambda(t))\leq C_4\Theta(x_0,t).\end{align*}

We use (\ref{eq207}), (\ref{eq411}) and recall $b_{ij}=\overline{\nabla}_{ij}h+h\delta_{ij}$ may give
\begin{align*}\frac{\partial (\det(\overline{\nabla}^2h+hI))^{-1}}{\partial t}=&-(\det(\overline{\nabla}^2h+hI))^{-2}
\frac{\partial(\det(\overline{\nabla}^2h+hI))}{\partial b_{ij}}\frac{\partial(\overline{\nabla}^2h+hI)}{\partial t}\\
=&-(\det(\overline{\nabla}^2h+hI))^{-2}\frac{\partial(\det(\overline{\nabla}^2h+hI))}{\partial b_{ij}}
(h_{tij}+h_t\delta_{ij})\\
\leq&(\det(\overline{\nabla}^2h+hI))^{-2}\frac{\partial(\det(\overline{\nabla}^2h+hI))}{\partial b_{ij}}
\Theta(b_{ij}-\varepsilon_0\delta_{ij})\\
\leq&\mathcal{K}\Theta((n-1)-\varepsilon_0(n-1)\mathcal{K}^{\frac{1}{n-1}}).
\end{align*}
Therefore, we have following conclusion with (\ref{eq412}) at $(x_0,t)$,
\begin{align}\label{eq414}
\frac{\partial}{\partial t} \Theta\leq\frac{1}{h-\varepsilon_0}\bigg(C_5\Theta^2+f\lambda h(\varphi|\nabla u|^q)^{-1}\mathcal{K}\Theta((n-1)-\varepsilon_0(n-1)\mathcal{K}^{\frac{1}{n-1}})\bigg)+\Theta+\Theta^2.
\end{align}
According to construction of $\Theta$ and the  previous estimates, we easily obtain
\begin{align*}
\frac{1}{C_6}\mathcal{K}\leq \Theta\leq C_6\mathcal{K}.
\end{align*}
If $\Theta >>1$, then (\ref{eq414}) implies that
\begin{align*}
\frac{\partial}{\partial t}\Theta\leq& \frac{1}{h-\varepsilon_0}\bigg(C_5\Theta^2+f\lambda h(\varphi|\nabla u|^q)^{-1}C_6\Theta^2((n-1)-\varepsilon_0(n-1)(C_6\Theta) ^{\frac{1}{n-1}})\bigg)+\Theta+\Theta^2\\
\leq&\frac{1}{h-\varepsilon_0}\Theta^2\bigg(\!C_5\!+\![f\lambda h(\varphi|\nabla u|^q)^{-1}C_6(n-1)]\!-\![f\lambda h(\varphi|\nabla u|^q)^{-1}C_6^{\frac{n}{n-1}}(n-1)]\varepsilon_0\Theta ^{\frac{1}{n-1}}+2\!\bigg)\\
=&\frac{[f\lambda h(\varphi|\nabla u|^q)^{-1}C_6^{\frac{n}{n-1}}(n-1)]}{h-\varepsilon_0}\Theta^2\bigg(\frac{C_5+[f\lambda h(\varphi|\nabla u|^q)^{-1}C_6(n-1)]+2}{[f\lambda h(\varphi|\nabla u|^q)^{-1}C_6^{\frac{n}{n-1}}(n-1)]}-\varepsilon_0\Theta ^{\frac{1}{n-1}}\bigg)\\
\leq & C_7\Theta^2(C_8-\varepsilon_0\Theta^{\frac{1}{n-1}})<0,
\end{align*}
since $C_7$ and $C_8$ depend on $\|f\|_{C^0(S^{n-1})},$ $\|\varphi\|_{C^1(I_{[0,T)})}$,  $\|h\|_{C^0(S^{n-1}\times [0,T))}$, $\|h\|_{C^1(S^{n-1}\times [0,T))}$, $\|\lambda\|_{C^0(S^{n-1}\times [0,T))}$ and $|\nabla u|$. Consequently, we get
\begin{align*}
\Theta(x_0,t)\leq C,
\end{align*}
and for any $(x,t)$,
\begin{align*}
\mathcal{K}(x,t)=\frac{(h-\varepsilon_0)\Theta(x,t)+h}{f(x)h(\varphi|\nabla u|^q)^{-1}\lambda}\leq
\frac{(h-\varepsilon_0)\Theta(x_0,t)+h}{f(x)h(\varphi|\nabla u|^q)^{-1}\lambda}\leq C.
\end{align*}

Step 2: Prove $\kappa_i\geq\frac{1}{C}$.

We consider the auxiliary function as follows
\begin{align*}
\digamma (x,t)=\log\beta_{\max}(\{b_{ij}\})-A\log h+B|\overline{\nabla} h|^2,
\end{align*}
where $A, B$ are positive constants which will be chosen later, and $\beta_{\max}(\{b_{ij}\})$ denotes the maximal eigenvalue of $\{b_{ij}\}$; for convenience, we write $\{b^{ij}\}$ for $\{b_{ij}\}^{-1}$.

For every fixed $t\in[0,T)$, suppose $\max_{S^{n-1}}\digamma(x,t)$ is attained at point $x_0\in S^{n-1}$. By a rotation of coordinates, we may assume
\begin{align*}
\{b_{ij}(x_0,t)\} \text{ is diagonal,} \quad \text {and} \quad \beta_{\max}(\{b_{ij}\}(x_0,t))=b_{11}(x_0,t).
\end{align*}
Hence, in order to show $\kappa_i\geq\frac{1}{C}$, that is to prove $b_{11}\leq C.$ By means of the above assumption, we
transform $\digamma(x,t)$ into the following form,
\begin{align*}
\widetilde{\digamma}(x,t)=\log b_{11}-A\log h+B|\overline{\nabla} h|^2.
\end{align*}
Utilizing again the above assumption, for any fixed $t \in [0,T)$, $\widetilde{\digamma}(x,t)$ has a local
maximum at $(x_0,t)$, thus, we have at $(x_0,t)$,
\begin{align}\label{eq415}
0=\nabla_i\widetilde{\digamma}=&b^{11}\nabla_ib_{11}-A\frac{h_i}{h}+2B\sum h_kh_{ki}\\
\nonumber=&b^{11}(h_{i11}+h_1\delta_{i1})-A\frac{h_i}{h}+2Bh_ih_{ii},
\end{align}
and
\begin{align*}
0\geq&\nabla_{ii}\widetilde{\digamma}\\
=&\nabla_ib^{11}(h_{i11}+h_1\delta_{i1})+b^{11}
[\nabla_i(h_{i11}+h_1\delta_{i1})]-A\bigg(\frac{h_{ii}}{h}-\frac{h_i^2}{h^2}\bigg)
+2B(\sum h_kh_{kii}+h^2_{ii})\\
=&-(b_{11})^{-2}\nabla_ib_{11}(h_{i11}+h_1\delta_{i1})+b^{11}(\nabla_{ii}b_{11})-A\bigg(\frac{h_{ii}}{h}-\frac{h_i^2}{h^2}\bigg)
+2B(\sum h_kh_{kii}+h^2_{ii})\\
=&b^{11}\nabla_{ii}b_{11}-(b^{11})^2(\nabla_ib_{11})^2-A\bigg(\frac{h_{ii}}{h}-\frac{h_i^2}{h^2}\bigg)
+2B(\sum h_kh_{kii}+h^2_{ii}).	
\end{align*}
At $(x_0,t)$, we also have
\begin{align*}
\frac{\partial}{\partial t}\widetilde{\digamma}=&\frac{1}{b_{11}}\frac{\partial b_{11}}{\partial t}-A\frac{h_t}{h}+2B\sum h_kh_{kt}\\ =&b^{11}\frac{\partial}{\partial t}(h_{11}+h\delta_{11})-A\frac{h_t}{h}+2B\sum h_kh_{kt}\\
=&b^{11}(h_{11t}+h_t)-A\frac{h_t}{h}+2B\sum h_kh_{kt}.
\end{align*}

From Eq. (\ref{eq303}) and (\ref{eq207}), we know that
\begin{align}\label{eq416}
\nonumber \log(h-h_t)=&\log(h+\lambda(\varphi|\nabla u|^q)^{-1}\mathcal{K}hf-h)\\
\nonumber=&\log\mathcal{K}+
\log(\lambda(\varphi|\nabla u|^q)^{-1}hf)\\
=&-\log(\det(\overline{\nabla}^2h+hI))+\log(\lambda(\varphi|\nabla u|^q)^{-1}hf).
\end{align}
Let
\begin{align*}
\chi(x,t)=\log(\lambda(\varphi|\nabla u|^q)^{-1}hf).
\end{align*}
Differentiating (\ref{eq416}) once and twice, we respectively get
\begin{align*}\frac{h_k-h_{kt}}{h-h_t}=&-\sum b^{ij}\nabla_kb_{ij}+\nabla_k\chi\\
	=&-\sum b^{ii}(h_{kii}+h_i\delta_{ik})+\nabla_k\chi,
\end{align*}
and
\begin{align*}
\frac{h_{11}-h_{11t}}{h-h_t}-\frac{(h_1-h_{1t})^2}{(h-h_t)^2}=&-\bigg(-\sum (b^{ii})^2(\nabla_{i}b_{ii})^2+b^{ii}\nabla_{ii}b_{ii}\bigg)+\nabla_{11}\chi\\
=&-\sum b^{ii}\nabla_{11}b_{ii}+\sum b^{ii}b^{jj}(\nabla_1b_{ij})^2+\nabla_{11}\chi.
\end{align*}
By the Ricci identity, we have
\begin{align*}
\nabla_{11}b_{ii}=\nabla_{ii}b_{11}-b_{11}+b_{ii}.
\end{align*}
Thus, we can derive
\begin{align*}
\frac{\frac{\partial}{\partial t}\widetilde{\digamma}}{h-h_t}=&b^{11}\bigg(\frac{h_{11t}+h_t}
	{h-h_t}\bigg)-A\frac{h_t}{h(h-h_t)}
	+\frac{2B\sum h_kh_{kt}}{h-h_t}\\
	=&b^{11}\bigg(\frac{h_{11t}-h_{11}}{h-h_t}+\frac{h_{11}+h-h+h_t}{h-h_t}\bigg)-A\frac{1}{h}
	\frac{h_t-h+h}{h-h_t}
	+\frac{2B\sum h_kh_{kt}}{h-h_t}\\
	=&b^{11}\bigg(-\frac{(h_1-h_{1t})^2}{(h-h_t)^2}+\sum b^{ii}
	\nabla_{11}b_{ii}-\sum b^{ii}b^{jj}(\nabla_1b_{ij})^2-\nabla_{11}\chi\bigg.\\
	&\bigg.+\frac{h_{11}+h-{(h-h_t)}}
	{h-h_t}\bigg) -\frac{A}{h}\bigg(\frac{-(h-h_t)+h}{h-h_t}\bigg)+\frac{2B\sum h_kh_{kt}}{h-h_t}\\
	=&b^{11}\bigg(-\frac{(h_1-h_{1t})^2}{(h-h_t)^2}+\sum b^{ii}
	\nabla_{11}b_{ii}-\sum b^{ii}b^{jj}(\nabla_1b_{ij})^2-\nabla_{11}\chi\bigg)\\
	&+b^{11}\bigg(\frac{h_{11}+h}{h-h_t}-1\bigg) +\frac{A}{h}-\frac{A}{h-h_t}+\frac{2B\sum h_kh_{kt}}{h-h_t}\\
	=&b^{11}\bigg(-\frac{(h_1-h_{1t})^2}{(h-h_t)^2}+\sum b^{ii}
	\nabla_{11}b_{ii}-\sum b^{ii}b^{jj}(\nabla_1b_{ij})^2-\nabla_{11}\chi\bigg)+\frac{1-A}{h-h_t}\\
	&-b^{11}+\frac{A}{h}+\frac{2B\sum h_kh_{kt}}{h-h_t}\\
    \leq&b^{11}\bigg(\sum b^{ii}(\nabla_{ii}b_{11}-b_{11}+b_{ii})-\sum b^{ii}b^{jj}(\nabla_1b_{ij})^2\bigg)
	-b^{11}\nabla_{11}\chi+\frac{1-A}{h-h_t}\\
	&+\frac{A}{h}+\frac{2B\sum h_kh_{kt}}{h-h_t}\\
	\leq&\sum b^{ii}\bigg[(b^{11})^2(\nabla_ib_{11})^2+A\bigg(\frac{h_{ii}}{h}-\frac{h_i^2}{h^2}
	\bigg)-2B(\sum h_kh_{kii}+h_{ii}^2)\bigg]\\
	&-b^{11}\sum b^{ii}b^{jj}(\nabla_1b_{ij})^2-b^{11}\nabla_{11}\chi+\frac{1-A}{h-h_t}+\frac{A}{h}+\frac{2B\sum h_kh_{kt}}{h-h_t}\\
	\leq&\sum b^{ii}\bigg[A\bigg(\frac{h_{ii}+h-h}{h}-\frac{h_i^2}{h^2}\bigg)\bigg]+2B
	\sum h_k\bigg(-\sum b^{ii}h_{kii}+\frac{h_{kt}}{h-h_t}\bigg)\\
	&-2B\sum b^{ii}(b_{ii}-h)^2-b^{11}\nabla_{11}\chi+\frac{1-A}{h-h_t}+\frac{A}{h}\\
	\leq&\sum b^{ii}\bigg[A\bigg(\frac{b_{ii}}{h}-1\bigg)\bigg]+2B\sum h_k\bigg(\frac{h_k}{h-h_t}
	+b^{kk}h_k-\nabla_k\chi\bigg)\\
	&-2B\sum b^{ii}(b_{ii}^2-2b_{ii}h)-b^{11}\nabla_{11}\chi+\frac{1-A}{h-h_t}+\frac{A}{h}\\
	\leq&-2B\sum h_k\nabla_k\chi-b^{11}\nabla_{11}\chi+(2B|\overline{\nabla} h|-A)\sum b^{ii}-2B\sum b_{ii}\\
	&+4B(n-1)h+\frac{2B|\overline{\nabla} h|^2+1-A}{h-h_t}+\frac{nA}{h}.
\end{align*}
Recall
\begin{align*}
\chi(x,t)=\log(\lambda(\varphi|\nabla u|^q)^{-1}hf)=\log \lambda-\log \varphi-q\log|\nabla u|+\log h+\log f,
\end{align*}
since $\lambda$ is a constant factor, we have $\lambda_k=0$.
Consequently, we may obtain following form by $\chi(x,t)$ and (\ref{eq415}),
\begin{align*}
&-2B\sum h_k\nabla_k\chi-b^{11}\nabla_{11}\chi\\
=&-2B\sum h_k\bigg(\frac{f_k}{f}+\frac{h_k}{h}-q\frac{(|\nabla u|)_k}{|\nabla u|}
-\frac{\varphi^{\prime}h_k}{\varphi}\bigg)-b^{11}\nabla_{11}\chi\\
=&-2B\sum h_k\bigg(\frac{f_k}{f}+\frac{h_k}{h}-q\frac{(|\nabla u|)_k}{|\nabla u|}
-\frac{\varphi^{\prime}h_k}{\varphi}\bigg)\\
&-b^{11}\bigg(\frac{ff_{11}-f_1^2}{f^2}+\frac{hh_{11}-h_1^2}{h^2}-q\frac{|\nabla u|(|\nabla u|)_{11}-(|\nabla u|)^2_1}{(|\nabla u|)^2}-\frac{\varphi^{\prime \prime}h_1^2+\varphi^{\prime}h_{11}}{\varphi}
+\frac{(\varphi^{\prime}h_1)^2}{\varphi^2}\bigg)\\
\leq&C_9B+C_{10}b^{11}+2qB\sum h_k\frac{(|\nabla u|)_k}{|\nabla u|}+b^{11}\frac{h(b_{11}-h)}{h^2}\\
&+qb^{11}\frac{|\nabla u|(|\nabla u|)_{11}-(|\nabla u|)^2_1}{(|\nabla u|)^2}+b^{11}\bigg(\frac{\varphi^{\prime \prime}h_1^2+\varphi^{\prime}h_{11}}{\varphi}
+\frac{(\varphi^{\prime}h_1)^2}{\varphi^2}\bigg),
\end{align*}
where $\varphi^{\prime\prime}=\frac{\partial^2\varphi(s)}{\partial s^2}$,
\begin{align*}
b^{11}\bigg(\frac{\varphi^{\prime \prime}h_1^2+\varphi^{\prime}h_{11}}{\varphi}
+\frac{(\varphi^{\prime}h_1)^2}{\varphi^2}\bigg)=b^{11}\bigg(\frac{\varphi^{\prime \prime}h_1^2+\varphi^{\prime}(b_{11}-h)}{\varphi}
+\frac{(\varphi^{\prime}h_1)^2}{\varphi^2}\bigg)\leq C_{11}b^{11}.
\end{align*}
Recall that
\begin{align*}|\nabla u(X,t)|=-\nabla u(X,t)\cdot x,
\end{align*}
taking the covariant derivative above equality, we get
\begin{align*}
(|\nabla u|)_k=-b_{ik}((\nabla^2u)e_i\cdot x),
\end{align*}
further,
\begin{align*}
(|\nabla u|)_{11}=&-b_{i11}((\nabla^2u)e_i\cdot x)-b_{j1}b_{i1}((\nabla^3 u)e_je_i\cdot x)\\
&+b_{i1}((\nabla^2u)x\cdot x)-b_{i1}((\nabla^2u)e_i\cdot e_1).
\end{align*}
Thus, combining (\ref{eq403}) with Lemma \ref{lem47}, we get
\begin{align*}
2qB\sum h_k\frac{(|\nabla u|)_k}{|\nabla u|}=2qB\sum h_k\frac{-b_{ik}((\nabla^2u)e_i\cdot x)}{|\nabla u|}\leq C_{12}Bb_{11}.
\end{align*}
From (\ref{eq415}), we obtain
\begin{align*}
b^{11}b_{i11}=A\frac{h_i}{h}+2Bh_ih_{ii}=A\frac{h_i}{h}+2Bh_i(b_{ii}-h\delta_{ii}),
\end{align*}
therefore, from (\ref{eq403}), (\ref{eq406}) and Lemma \ref{lem47}, we get
\begin{align*}
qb^{11}\frac{|\nabla u|(|\nabla u|)_{11}-(|\nabla u|)^2_1}{(|\nabla u|)^2}\leq C_{13}Bb_{11}.
\end{align*}
It follows that
\begin{align*}
\frac{\frac{\partial}{\partial t}\widetilde{\digamma}}{h-h_t}\leq C_{14}Bb_{11}+C_{15}b^{11}+
(2B|\overline{\nabla} h|-A)\sum b^{ii}-2B\sum b_{ii}+4B(n-1)h+\frac{nA}{h}<0,
\end{align*}
provided $b_{11}>>1$ and if we choose $A>>B$. We obtain
\begin{align*}
\widetilde{\digamma}(x_0,t)\leq C,
\end{align*}
hence,
\begin{align*}
\digamma(x_0,t)=\widetilde{\digamma}(x_0,t)\leq C.
\end{align*}
This tells us the principal radii are bounded from above, or equivalently $\kappa_i\geq\frac{1}{C}$.
\end{proof}

\vskip 0pt
\section{\bf The convergence of the flow}\label{sec5}

With the help of priori estimates in the section \ref{sec4}, the long-time existence and asymptotic behaviour of the flow (\ref{eq109}) (or (\ref{eq301})) are obtained, we also can complete proof of Theorem \ref{thm13}.
\begin{proof}[Proof of the Theorem \ref{thm13}] Since Eq. (\ref{eq303}) is parabolic, we can get its short time existence. Let $T$ be the maximal time such that $h(\cdot, t)$ is a smooth, non-even and strictly convex solution to Eq. (\ref{eq303}) for all $t\in[0,T)$. Lemma \ref{lem43}-\ref{lem45}, Remark \ref{rem46} and Lemma \ref{lem47} enable us to apply Lemma \ref{lem48} to Eq. (\ref{eq303}), thus, we can deduce a
uniformly upper and lower bounds for the biggest eigenvalue of $\{(h_{ij}+h\delta_{ij})(x,t)\}$. This implies
\begin{align*}
C^{-1}I\leq (h_{ij}+h\delta_{ij})(x,t)\leq CI,\quad \forall (x,t)\in S^{n-1}\times [0,T),
\end{align*}
where $C>0$ independents on $t$. This shows that Eq. (\ref{eq303}) is uniformly parabolic. Estimates for higher derivatives follows from the standard regularity theory of uniformly parabolic equations
Krylov \cite{KR}. Hence, we obtain the long time existence and regularity of solutions for the flow
(\ref{eq109}) (or (\ref{eq301})). Moreover, we obtain
\begin{align}\label{eq501}
\|h\|_{C^{l,m}_{x,t}(S^{n-1}\times [0,T))}\leq C_{l,m},
\end{align}
for some $C_{l,m}$ ($l, m$ are nonnegative integers pairs) independent of $t$, then $T=\infty$. Using parabolic comparison principle, we can attain the uniqueness of the smooth, non-even and strictly convex solution $h(\cdot,t)$ of Eq. (\ref{eq303}).

Now, recall the non-increasing property of $\Gamma(\Omega_t)$ in Lemma \ref{lem32}, we know that

\begin{align}\label{eq502}
\frac{\partial\Gamma(\Omega_t)}{\partial t}\leq 0.
\end{align}
Based on (\ref{eq502}), there exists a $t_0$ such that
\begin{align*}
\frac{\partial\Gamma(\Omega_t)}{\partial t}\bigg|_{t=t_0}=0,
\end{align*}
this yields
\begin{align*}
\tau\varphi(h)|\nabla u|^q\det(\nabla_{ij}h+h\delta_{ij})=f.
\end{align*}
Let $\Omega=\Omega_{t_0}$, thus, $\Omega$ satisfies (\ref{eq108}).

In view of (\ref{eq501}), applying the Arzel$\grave{a}$-Ascoli theorem \cite{BRE} and a diagonal argument, we can extract
a subsequence of $t$, denoted by $\{t_j\}_{j \in \mathbb{N}}\subset (0,+\infty)$, and there exists a smooth function $h(x)$ such that
\begin{align}\label{eq503}
\|h(x,t_j)-h(x)\|_{C^l(S^{n-1})}\rightarrow 0,
\end{align}
uniformly for each nonnegative integer $l$ as $t_j \rightarrow \infty$. This reveals that $h(x)$ is a support function. Let us denote by $\Omega$ the convex body determined by $h(x)$. Thus, $\Omega$ is smooth, strictly convex and containing the origin in its interior.

Moreover, by (\ref{eq501}) and the uniform estimates in section \ref{sec4}, we conclude that $\Gamma(\Omega_t)$ is a bounded function in $t$ and $\frac{\partial \Gamma(\Omega_t)}{\partial t}$ is uniformly continuous. Thus, for any $t>0$, by monotonicity of the $\Gamma(\Omega_t)$ in Lemma \ref{lem32}, there is a constant $C>0$ independent of $t$, such that
\begin{align*}
\int_0^t\bigg(-\frac{\partial\Gamma(\Omega_t)}{\partial t}\bigg)dt=\Gamma(\Omega_0)-\Gamma(\Omega_t)\leq C,
\end{align*}
this gives
\begin{align}\label{eq504}\lim_{t\rightarrow\infty}\Gamma(\Omega_t)-\Gamma(\Omega_0)=
-\int_0^\infty\bigg|\frac{\partial}{\partial t}\Gamma(\Omega_t)\bigg|dt\leq C.
\end{align}
The left hand side of (\ref{eq504}) is bounded below by $-2C$, therefore, there is a subsequence $t_j\rightarrow\infty$ such that
\begin{align*}
\frac{\partial}{\partial t}\Gamma(\Omega_{t_j})\rightarrow 0 \quad\text{as}\quad  t_j\rightarrow\infty.
\end{align*}
The proof of Lemma \ref{lem32} shows that
\begin{align*}
\frac{\partial\Gamma(\Omega_t)}{\partial t}\bigg|_{t=t_j}=-\frac{\int_{S^{n-1}}|\nabla u|^q\frac{h}
{\mathcal{K}}dx}{\int_{S^{n-1}}\frac{hf}{\varphi(h)}dx}\int_{S^{n-1}}\frac{f^{2}(x)h}{|\nabla u|^q\varphi^2(h)}\mathcal{K}dx+\int_{S^{n-1}}\frac{hf(x)}{\varphi(h)}dx\leq 0,
\end{align*}
taking the limit $t_j\rightarrow \infty$, by equality condition of H\"{o}lder inequality, it means that there exists
\begin{align*}
\tau\varphi(h^{\infty})|\nabla u(X^{\infty})|^q\det(\nabla_{ij}h^{\infty}+h^{\infty}\delta_{ij})=f(x),
\end{align*}
which satisfies (\ref{eq108}) with $\tau$ given by
\begin{align*}\frac{1}{\tau}=\lim_{t_j\rightarrow\infty}\lambda(t_j),
\end{align*}
where $h^{\infty}$ is the support function of $\Omega^{\infty}$, and $X^{\infty}=\nabla h^{\infty}$. This completes the proof of Theorem \ref{thm13}.
\end{proof}

\end{document}